\newtheorem{theorem}{Theorem}
\newtheorem{proposition}{Proposition}
\newtheorem*{theorem*}{Theorem}
\newtheorem{lemma}{Lemma}
\newtheorem{corollary}{Corollary}
\theoremstyle{remark}
\newtheorem{remark}{Remark}
\newtheorem{example}{Example}
\newcommand{\C}{\mathbb{C}}
\newcommand{\D}{\Omega}
\newcommand{\Dc}{\overline{\Omega}}
\newcommand{\dbar}{\overline{\partial}}
\title[Localization of compactness of Hankel operators]{Localization of
compactness of Hankel operators on pseudoconvex domains}
\author{S\"{o}nmez \c{S}ahuto\u{g}lu}
\address{University of Toledo, Department of Mathematics \& Statistics, 
Toledo, OH 43606, USA}
\email{sonmez.sahutoglu@utoledo.edu}
\thanks{The author is supported in part by the University of Toledo's Summer Research
Awards and Fellowships Program}
\subjclass[2010]{Primary  32W05, 47B35}
\keywords{Localization, compactness of Hankel operators, pseudoconvex domains,
$\dbar$-Neumann problem}
\begin{document}

\begin{abstract}
We prove the following localization  for compactness of Hankel operators on Bergman 
spaces. Assume that  $\D$ is a bounded pseudoconvex domain in $\C^{n}, p$ is a 
boundary point of $\D,$ and $B(p,r)$ is a ball centered at $p$ with radius $r$ so that
$U=\D\cap B(p,r)$ is connected. We show that if the Hankel operator $H^{\D}_{\phi}$ with
symbol $\phi \in C^1(\Dc)$ is compact  on $A^{2}(\D)$ then $H^{U}_{R_{U}(\phi)}$ is
compact on $A^{2}(U)$ where $R_U$ denotes the restriction operator on $U,$ and $A^2(\D)$
and $ A^2(U)$ denote the Bergman spaces on $\D$ and $U,$ respectively. 
\end{abstract}

\maketitle

Let $V$ be a domain in $\C^n$ and $ A^2(V)$ denote the Bergman space on $V$, the space
of square integrable holomorphic functions on $V$ with respect to the Lebesgue
measure $d\lambda$ in $\C^n.$ Let $P^V$ denote the Bergman projection, the
orthogonal projection from $L^2(V)$ onto $A^2(V).$  The Hankel operator,
$H^V_{\phi},$ with symbol $\phi\in L^{\infty}(V)$ is defined as
$H^V_{\phi}(f)=\phi f-P^V(\phi f)$ for $f\in A^2(V).$

A Hankel operator is the  commutator $[M_{\phi},P^V]$ of a multiplication
operator  with the Bergman projection.  Such commutators play important roles in some
problems in several complex variables (see, for example, \cite{CatlinD'Angelo97}).

Compactness is an important concept in analysis. In this paper, we are interested 
in the localization of compactness of Hankel operators. More precisely, we are 
interested in the following question:

\textit{Let $\D$ be a bounded pseudoconvex domain in 
$\C^n, \phi\in L^{\infty}(\D),$ and $p\in b\D$ where $b\D$ denotes the boundary of  $\D.$
Assume that  $U=\D\cap B(p,r)$ is connected, $R_U$ denotes the restriction
onto $U,$ and $H^{\D}_{\phi}$ is compact on $A^2(\D).$ Is
$H^{U}_{R_U(\phi)}$ compact on $A^2(U)$?}

We are not able to answer the question in general.  Using the $\dbar$-Neumann operator, we
show that the answer is yes when the symbol is $C^1$ on the closure of the domain. For 
more information about the $\dbar$-Neumann problem see 
\cite{ChenShawBook,StraubeBook} and consult \cite{ZhuBook} about the theory of 
Hankel operators on domains in $\C$. 

It would be interesting to know if Theorem  \ref{ThmMain} below is still true
without the $C^1$ differentiability requirement.  We note that in dimension one,  
regularity of the symbol can be relaxed. For example, one can choose the symbol
to be continuous up to the boundary  (see Proposition \ref{PropInC} below). 
However, in that case localization is trivial as compactness is not due to localization. 
The following proposition is probably known, although we cannot provide a
reference. We therefore include a proof that was suggested in \cite{StraubePrivate}.

\begin{proposition}\label{PropInC}
Let $\D$ be a bounded domain in $\C$ and $\phi\in C(\Dc).$ Then the Hankel
operator $H^{\D}_{\phi}$ is compact on $A^2(\D).$ 
\end{proposition}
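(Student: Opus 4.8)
The plan is to exploit two features special to one complex variable: that the $\dbar$-equation on a bounded domain $\D\subset\C$ is solved by an explicit, \emph{compact} integral operator, and that the assignment $\phi\mapsto H^{\D}_\phi$ is contractive, so the symbols yielding compact Hankel operators form a closed subspace of $C(\Dc)$. First I would record the elementary estimate $\|H^{\D}_\phi\|\le\|\phi\|_{L^\infty(\D)}$, immediate from $H^{\D}_\phi f=(I-P^{\D})(\phi f)$ and $\|I-P^{\D}\|\le 1$. Consequently, if $\phi_j\to\phi$ uniformly on $\Dc$ and each $H^{\D}_{\phi_j}$ is compact, then $H^{\D}_\phi$ is a norm limit of compact operators, hence compact. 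By the Stone--Weierstrass theorem the polynomials in $z$ and $\bar z$ are dense in $C(\Dc)$, so it suffices to prove compactness of $H^{\D}_\phi$ for $\phi\in C^1(\Dc)$, which contains all such polynomials.

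For a $C^1$ symbol I would bring in the solid Cauchy transform $(Tg)(z)=\frac{1}{\pi}\int_{\D}\frac{g(w)}{z-w}\,d\lambda(w)$, which for $g\in L^2(\D)$ solves $\frac{\partial}{\partial\bar z}(Tg)=g$ in the distributional sense, since $1/(\pi z)$ is the fundamental solution of $\partial/\partial\bar z$. The key observation is that, because $f\in A^2(\D)$ is holomorphic, $\frac{\partial}{\partial\bar z}(\phi f)=\bigl(\frac{\partial\phi}{\partial\bar z}\bigr)f$, and hence $\phi f-T\bigl(\frac{\partial\phi}{\partial\bar z}\,f\bigr)$ is annihilated by $\partial/\partial\bar z$; by Weyl's lemma it is holomorphic, and it lies in $L^2(\D)$ because $\D$ is bounded. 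Applying $I-P^{\D}$, and using that it kills $A^2(\D)$, yields the factorization
\[
H^{\D}_\phi f=(I-P^{\D})\,T\Bigl(\tfrac{\partial\phi}{\partial\bar z}\,f\Bigr)=(I-P^{\D})\,T\,M_{\partial\phi/\partial\bar z}\,f,
\]
where $M_{\partial\phi/\partial\bar z}$ denotes multiplication by the bounded function $\partial\phi/\partial\bar z$.

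It then remains to establish the one genuinely analytic point, which I expect to be the main obstacle: that $T$ is compact on $L^2(\D)$. Because the kernel $1/(\pi(z-w))$ is weakly singular of order $1<2$ on the bounded set $\D$, I would argue by a Schur-type estimate: truncating the kernel away from the diagonal gives Hilbert--Schmidt (hence compact) operators $T_\varepsilon$, while $\|T-T_\varepsilon\|$ is controlled by $\sup_z\int_{|z-w|\le\varepsilon}\frac{d\lambda(w)}{\pi|z-w|}=2\varepsilon\to 0$, so $T$ is a norm limit of compact operators. Granting this, $M_{\partial\phi/\partial\bar z}\colon A^2(\D)\to L^2(\D)$ is bounded, $T$ is compact, and $I-P^{\D}$ is bounded, so the displayed factorization shows $H^{\D}_\phi$ is compact for every $\phi\in C^1(\Dc)$; the density argument of the first paragraph then completes the proof. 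I note that this approach uses nothing about the regularity or pseudoconvexity of $\D$ beyond boundedness, since the Cauchy transform is available for every bounded domain in $\C$.
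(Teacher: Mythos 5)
Your proposal is correct and follows essentially the same route as the paper: reduce by a norm-limit/density argument to regular symbols, write the Hankel operator as $(I-P^{\D})$ applied to the Cauchy transform of $f\,\partial\phi/\partial\bar z$, and prove compactness of that integral operator by splitting the kernel into a Hilbert--Schmidt piece away from the diagonal and a small-norm piece near it. The only differences are cosmetic (you factor through $T\,M_{\partial\phi/\partial\bar z}$ and invoke Weyl's lemma where the paper verifies $\dbar S_{\psi}(f)=f\dbar\psi$ by approximation and the Cauchy--Pompeiu formula).
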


The main result of this paper is the following theorem.

\begin{theorem}\label{ThmMain}
Let $\D$ be a bounded pseudoconvex domain in $\C^{n}, p\in b\D,$ and
$B(p,r)$ be a ball centered at $p$ with radius $r>0$ so that $U=\D\cap B(p,r)$
is connected. Assume that $\phi \in C^1(\Dc)$ and $H^{\D}_{\phi}$  is compact  on
$A^{2}(\D).$ Then $H^{U}_{R_{U}(\phi)}$ is compact on $A^{2}(U) .$
\end{theorem}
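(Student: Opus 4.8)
The plan is to combine the weak-sequence criterion for compactness with the realization of a Hankel operator as a canonical solution operator of the $\dbar$-problem, transferring the known compactness on $\D$ to $U$ through an approximate holomorphic extension built from the $\dbar$-Neumann operator $N^{\D}$. First I record two reformulations. Because $H^{V}_{\phi}f=\phi f-P^{V}(\phi f)$ is the component of $\phi f$ orthogonal to $A^{2}(V)$, we have $\|H^{V}_{\phi}f\|_{L^{2}(V)}=\operatorname{dist}_{L^{2}(V)}(\phi f,A^{2}(V))$; and since $\dbar(\phi f)=f\,\dbar\phi$ is $\dbar$-closed while $H^{V}_{\phi}f\perp A^{2}(V)$, this operator returns the canonical (minimal norm) solution of $\dbar u=f\,\dbar\phi$, i.e. $H^{V}_{\phi}f=\dbar^{*}N^{V}(f\,\dbar\phi)$. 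I also use that $H^{U}_{\psi}$, with $\psi=R_{U}(\phi)$, is compact if and only if $\|H^{U}_{\psi}f_{j}\|_{L^{2}(U)}\to0$ for every $\{f_{j}\}\subset A^{2}(U)$ with $\|f_{j}\|\le1$ and $f_{j}\rightharpoonup0$; recall that such $f_{j}\to0$ uniformly on compact subsets of $U$. Fix such a sequence.

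Next is the transfer step. Choose $\chi\in C^{\infty}_{c}(B(p,r))$ equal to $1$ on a neighborhood of $b\D\cap B(p,r_{1})$ for some $r_{1}<r$ and vanishing near the artificial boundary $\D\cap\partial B(p,r)$, so that $\dbar\chi$ is supported in a shell interior to the ball. Extending by zero, $\chi f_{j}\in L^{2}(\D)$, and I set $F_{j}:=\chi f_{j}-\dbar^{*}N^{\D}(f_{j}\,\dbar\chi)$. Since $f_{j}\,\dbar\chi$ is $\dbar$-closed, $F_{j}\in A^{2}(\D)$; moreover $\{F_{j}\}$ is bounded, and because $\chi f_{j}\rightharpoonup0$ in $L^{2}(\D)$ and $\dbar^{*}N^{\D}$ is bounded, $F_{j}\rightharpoonup0$ in $A^{2}(\D)$. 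Compactness of $H^{\D}_{\phi}$ then gives $\|H^{\D}_{\phi}F_{j}\|_{L^{2}(\D)}\to0$. With $H_{j}:=P^{\D}(\phi F_{j})\in A^{2}(\D)$, the distance formula on $U$, the identity $\phi f_{j}-H_{j}=\phi(1-\chi)f_{j}+\phi\,\dbar^{*}N^{\D}(f_{j}\,\dbar\chi)+(\phi F_{j}-H_{j})$ on $U$, and $\|\cdot\|_{L^{2}(U)}\le\|\cdot\|_{L^{2}(\D)}$ yield
\[
\|H^{U}_{\psi}f_{j}\|_{L^{2}(U)}\le\|H^{\D}_{\phi}F_{j}\|_{L^{2}(\D)}+\|\phi\|_{\infty}\big(\|(1-\chi)f_{j}\|_{L^{2}(U)}+\|\dbar^{*}N^{\D}(f_{j}\,\dbar\chi)\|_{L^{2}(\D)}\big).
\]
The first term tends to $0$, so the theorem reduces to estimating the two error terms, both of which are governed by the behavior of $f_{j}$ near the artificial boundary.

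Over the part of $\operatorname{supp}(1-\chi)$ and of $\operatorname{supp}\dbar\chi$ lying in a fixed compact subset of $\D$, the sources $f_{j}$ and $f_{j}\,\dbar\chi$ converge to $0$ in $L^{2}$ because $f_{j}\to0$ uniformly there, and $\dbar^{*}N^{\D}$ is bounded; hence these interior contributions vanish in the limit. The main obstacle is the edge $b\D\cap\partial B(p,r)$, where the artificial boundary meets $b\D$: there the transition region of any admissible $\chi$ must abut $b\D$, so the interior argument fails precisely where $f_{j}$ may concentrate, and the two mechanisms of the proof—local vanishing near the interior of the artificial boundary and global compactness of $H^{\D}_{\phi}$ along $b\D$—must be reconciled. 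I expect this to be the crux: one takes the transition shell of width $\delta$, estimates the two error terms over the $\delta$-collar of the edge using the $C^{1}$ bound on $\phi$ to handle the commutators $[M_{\phi},\chi]$ and the interaction with $P^{\D}$, and then reabsorbs the remaining near-edge mass, which sits on $b\D$, into the compactness of $H^{\D}_{\phi}$ before letting $\delta\to0$. This edge analysis is where the $C^{1}$ hypothesis on the symbol is essential and where I expect the greatest technical difficulty.
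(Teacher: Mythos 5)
Your framework---weak-null sequences, the distance formula $\|H^{U}_{\psi}f\|=\operatorname{dist}_{L^{2}(U)}(\psi f,A^{2}(U))$, and the extension $F_{j}=\chi f_{j}-\dbar^{*}N^{\D}(f_{j}\dbar\chi)=P^{\D}(\chi f_{j})$---is sensible, but the argument has two genuine gaps, only one of which you acknowledge. The unacknowledged one: you assert that the contributions of $(1-\chi)f_{j}$ and $f_{j}\dbar\chi$ over the parts of their supports lying in a fixed compact subset of $\D$ vanish because ``$f_{j}\to 0$ uniformly there.'' A weakly null sequence in $A^{2}(U)$ converges uniformly only on compact subsets of $U$, not of $\D$. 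The support of $1-\chi$ contains a full one-sided neighborhood in $U$ of the artificial boundary $\D\cap bB(p,r)$, which is interior to $\D$ but lies in $bU$; normalized Bergman kernels of $U$ at points tending to $\D\cap bB(p,r)$ are weakly null yet keep $\|(1-\chi)f_{j}\|_{L^{2}(U)}$ bounded away from $0$. Hence the term $\|\phi\|_{\infty}\|(1-\chi)f_{j}\|_{L^{2}(U)}$ in your displayed inequality does not tend to zero, and the decomposition fails along the entire spherical part of $bU$, not merely at the edge $b\D\cap bB(p,r)$. The acknowledged gap is equally real: for the edge you offer only the hope of ``reabsorbing the near-edge mass into the compactness of $H^{\D}_{\phi}$,'' with no mechanism.

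The paper closes both gaps with a single tool absent from your proposal: since $|z-p|^{2}$ is strictly plurisubharmonic, the function $\psi_{\varepsilon}=\gamma(\rho_{\varepsilon})$ with $\rho_{\varepsilon}(z)=\frac{1}{\varepsilon}(-r^{2}+\sum_{j}|z_{j}-p_{j}|^{2})$ is plurisubharmonic, uniformly bounded, and has complex Hessian at least $1/\varepsilon$ on the whole collar $\overline{B(p,r)\setminus B(p,r-\delta)}$, including the edge. Via the weighted Kohn--Morrey--H\"{o}rmander inequality (\cite[Corollary 2.13]{StraubeBook}) this yields $\|\chi N^{U}(f\dbar\phi)\|^{2}\lesssim\varepsilon\|f\|^{2}$ for a cutoff $\chi$ supported in the collar, i.e.\ a compactness estimate for $N^{U}$ localized to the entire artificial boundary and its junction with $b\D$; this is exactly what controls the region where your error terms blow up. On the complementary region $\D(p,r-\delta)$, where compactness of $H^{\D}_{\phi}$ is actually invoked (through the compactness-estimate formulation of Lemma \ref{LemCompactH} rather than weak sequences), the paper does not use $P^{\D}(\chi f)$ but instead produces, via H\"{o}rmander's theorem with the weights $e^{-k\psi}$, $\psi(z)=-r_{2}^{2}+\sum_{j}|z_{j}-p_{j}|^{2}$ (Lemma \ref{Lem2}), a global $F_{\varepsilon}\in A^{2}(\D)$ with $\|f-F_{\varepsilon}\|_{L^{2}(\D(p,r-\delta))}\leq\varepsilon\|f\|$. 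That quantitative smallness is what your construction lacks: your correction $\dbar^{*}N^{\D}(f_{j}\dbar\chi)$ is merely bounded, not small. You would need substitutes for both ingredients to complete the proof.
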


We note that in the theorem above no regularity of $b\D$ is assumed. That is, the boundary
of $\D$ may be very irregular. Also $\phi\in C^1(\Dc)$ means that the function $\phi$ and
all of its first partial derivatives  have continuous extensions up to the boundary.

Localization is an important technique in analysis. So we believe that such
results can be useful in studying compactness of Hankel operators in connection to
boundary geometry (see, for example, \cite{CelikSahutoglu,CuckovicSahutoglu09}).
This particular localization can be useful in the following way: when one
studies compactness of Hankel operators in relation to the boundary geometry of
a smooth bounded pseudoconvex domain, usually a local holomorphic  change
of coordinates is needed to simplify the boundary geometry while preserving the
compactness of the operator. Theorem \ref{ThmMain} guarantees that this is
possible when the local domain is an intersection with a ball and the symbol is
sufficiently regular.  

The converse of Theorem \ref{ThmMain} is known to be true (see, for
example, ii. in Proposition 1 in \cite{CuckovicSahutoglu09}). Hence we have the
following  corollary.

\begin{corollary}
Let $\D$ be a bounded pseudoconvex domain in $\C^{n},\phi \in C^1(\Dc),$
and $B(q,r)$ denote a ball centered at $q\in b\D$ with radius $r>0.$ 
\begin{itemize}
 \item[i.] If $U=\D\cap B(p,r)$ is connected for some $p\in b\D,r>0,$ and 
$H^{\D}_{\phi}$ is compact on $A^{2}(\D)$ then $H^{U}_{R_{U}(\phi)}$ is compact on
$A^{2}(U).$
\item[ii.] Assume that for any $p\in b\D$ there exists $r>0$ such that 
$U=\D\cap B(p,r)$ is connected and  $H^{U}_{R_{U}(\phi)}$ is compact on
$A^{2}(U).$ Then $H^{\D}_{\phi}$ is compact  on $A^{2}(\D).$
\end{itemize}
\end{corollary}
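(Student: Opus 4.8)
The corollary is the conjunction of two facts already at hand: part~(i) is precisely Theorem~\ref{ThmMain}, and part~(ii) is the converse recorded just before it (see \cite{CuckovicSahutoglu09}), so a proof amounts to quoting these two statements and observing that (ii) is applied with the balls $B(p,r)$ furnished by its hypothesis as $p$ ranges over all of $b\D$. The mathematical substance therefore sits in Theorem~\ref{ThmMain} for (i) and in the local-to-global mechanism behind (ii); since both rest on the same analytic core I record how I would prove each.

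For (i) the plan is to use, on any bounded pseudoconvex domain $V$ and for $f\in A^2(V)$, the identity $H^V_\phi f=\dbar^*N^V((\dbar\phi)f)$, where $N^V$ is the $\dbar$-Neumann operator on $(0,1)$-forms: since $\dbar(H^V_\phi f)=(\dbar\phi)f$ while $H^V_\phi f\perp A^2(V)$, the Hankel image is exactly the $L^2$-minimal solution of $\dbar u=(\dbar\phi)f$. Two consequences guide the argument. First, minimality gives $\|H^V_\phi f\|_{L^2(V)}\le\|v\|_{L^2(V)}$ for \emph{any} $v$ with $\dbar v=(\dbar\phi)f$, so upper bounds follow by exhibiting convenient, non-canonical solutions. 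Second, a routine functional-analytic argument, using the compact embedding $A^2(V)\hookrightarrow W^{-1}(V)$, shows that $H^V_\phi$ is compact on $A^2(V)$ if and only if for every $\epsilon>0$ there is $C_\epsilon>0$ with $\|H^V_\phi f\|^2_{L^2(V)}\le\epsilon\|f\|^2_{L^2(V)}+C_\epsilon\|f\|^2_{W^{-1}(V)}$ for all $f\in A^2(V)$. As $U=\D\cap B(p,r)$ is an intersection of pseudoconvex domains and hence pseudoconvex, $N^U$ and this equivalence are available on $U$, and it suffices to establish the displayed compactness estimate for $H^U_{R_U(\phi)}$.

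I would prove that estimate by localizing on the compact set $\overline{bU}$, which splits into the spherical part $\D\cap bB(p,r)$ and the part $b\D\cap B(p,r)$. Near a spherical point $U$ is strongly pseudoconvex, so a subelliptic estimate for $\dbar$ holds there and, with $\dbar\phi\in L^\infty(\Dc)$ and $H^U_\phi=\dbar^*N^U M_{\dbar\phi}$, yields a local compactness estimate with no further restriction on $\phi$. Near a point $q\in b\D\cap B(p,r)$ one has $U\cap B(q,\rho)=\D\cap B(q,\rho)$ for small $\rho$, so $U$ and $\D$ coincide in a neighborhood of $q$; there I would transfer compactness from $\D$ by setting $G=P^\D(\chi f)$ for a cutoff $\chi$ supported in that neighborhood and equal to $1$ near $q$. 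A short computation with $\langle f,\chi h\rangle_U=\langle f,P^U(\chi h)\rangle_U$ shows that a weakly null sequence $f_j$ in $A^2(U)$ produces a weakly null $G_j$ in $A^2(\D)$, whence $\|H^\D_\phi G_j\|_{L^2(\D)}\to0$ by compactness of $H^\D_\phi$; feeding $\chi\,(H^\D_\phi G)\big|_U$ into the minimal-solution bound reduces the local estimate at $q$ to a residual equation $\dbar v'=(\dbar\phi)(f-G)$ on $U$. A partition of unity subordinate to a finite cover of $\overline{bU}$ glues the local estimates, while the interior of $U$ is handled directly by $A^2(U)\hookrightarrow L^2(K)$ for $K\Subset U$. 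Part~(ii) uses the same gluing: covering $b\D$ by the balls from its hypothesis, on each of which $H^U_{R_U(\phi)}$ is compact, one assembles a global compactness estimate for $H^\D_\phi$.

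The step I expect to be hardest, common to both directions, is the control of the cutoff errors near the corner $b\D\cap bB(p,r)$ and across chart overlaps. Because $A^2(U)$ admits no holomorphic extension into $A^2(\D)$, the residual $f-G=(1-\chi)f+\dbar^*N^\D((\dbar\chi)f)$ genuinely survives; its Kohn correction $\dbar^*N^\D((\dbar\chi)f)$ carries data supported in the transition shell of $\chi$, which near the corner abuts both $b\D$ and $bB(p,r)$, and since $N^\D$ gains no regularity on a general pseudoconvex domain this term is not captured by a purely interior norm. The plan is to absorb it by letting the strong pseudoconvexity of the spherical boundary govern the part of the shell meeting $bB(p,r)$, where the canonical solution operator is locally compact, and a second use of the compactness of $H^\D_\phi$ govern the part meeting $b\D$, with the cutoffs arranged so that every commutator error lands in the weaker $W^{-1}$ norm uniformly up to the corner. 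Making this control uniform across $b\D\cap bB(p,r)$ is the crux of the argument.
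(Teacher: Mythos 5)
Your first paragraph is exactly the paper's proof: part (i) is Theorem \ref{ThmMain} verbatim, and part (ii) is the known converse cited just before the corollary (Proposition 1, ii.\ in \cite{CuckovicSahutoglu09}), so nothing more is required. The long appended sketch of how you would re-prove Theorem \ref{ThmMain} is superfluous for this corollary and, for what it is worth, departs from the paper's actual argument --- which approximates elements of $A^2(U)$ by restrictions of elements of $A^2(\D)$ via a H\"ormander-weight extension operator (Lemma \ref{Lem2}) rather than via $P^{\D}(\chi f)$, and controls the spherical part by a plurisubharmonic weight with large Hessian rather than by subelliptic estimates, precisely sidestepping the corner difficulty you yourself flag as unresolved.
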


\begin{remark}\label{remark}
The proof of Theorem \ref{ThmMain} shows that the localization of compactness
of Hankel operators is still true on the intersection of the domain $\D$ with
strongly pseudoconvex domains. Whether Theorem \ref{ThmMain} 
holds on the intersection of $\D$ with domains with compact $\dbar$-Neumann
operator is still open. However, it may not hold on the intersection
of $\D$ with a general pseudoconvex domain. For example, let $U=\D\cap V$ where
$V$ is a smooth bounded convex domain and $bV\cap \D$ contains a nontrivial
analytic disc $D$, and  $\phi\in C^{\infty}(\Dc)$ such that $\phi\equiv 0$ on
$b\D$ and $\phi \circ \beta$ is not holomorphic for some holomorphic mapping 
$\beta:\{z\in \C:|z|<1\}\to D$. Then one can use the facts that the product
operator $M_{\phi}:A^2(\D)\to L^2(\D)$ is compact and the Hankel operator
$H^{\D}_{\phi}$ is a composition of the projection on the orthogonal complement
of the Bergman space with $M_{\phi}$ to show that $H^{\D}_{\phi}$ is compact.
Moreover, since $\phi \circ \beta$ is not holomorphic for some holomorphic mapping 
$\beta:\{z\in \C:|z|<1\}\to D$ Theorem 2 in  \cite{CuckovicSahutoglu09} implies
that $H^U_{R_U(\phi)}$ is not compact (even though, 
\cite[Theorem 2]{CuckovicSahutoglu09} is stated for smooth domains its proof is
still valid on $U$). Therefore, $H^{\D}_{\phi}$ is compact on $A^2(\D)$ while
$H^U_{R_U(\phi)}$ is not compact on $A^2(U).$
\end{remark}

In the following examples we show that boundedness and pseudoconvexity of the domain 
are necessary  in Theorem \ref{ThmMain}.

\begin{example}\label{Example1}
This example shows that boundedness of the domain $\D$ is necessary. Let us
denote $\mathbb{D} =\{z\in \C:|z|<1\}, \D=\mathbb{D}\times \C, p=(1,0),$ and 
$ \phi(z,w)=\xi(|w|)$ where $\xi\in C^{\infty}_{0}(-1,1)$ and $\xi(0)=1.$ Let
$f\in A^{2}(\D)$ then 
\[\int_{\D}|f(z,w)|^{2}d\lambda(z,w)=\int_{\mathbb{D}}\int_{\C}|f(z,w)|^{2}
d\lambda(w)d\lambda(z)<\infty.\]
Fubini's theorem implies that the set 
$\Gamma=\{z\in \mathbb{D}:\int_{\C}|f(z,w)|^{2}d\lambda(w)=\infty\}$ has measure
zero. Hence $f(z,w)=0$ for $z\not\in \Gamma$ and $w\in \C.$  This implies that
$A^{2}(\D)=\{0\}$ and $H_{\phi}=0.$ In particular, $H_{\phi}$ is compact.
However, since there is an analytic disc through $p$  in the boundary of
$U=\D\cap B(p,1)$   \cite[Theorem 1]{CuckovicSahutoglu09} (see  the last
sentence in Remark \ref{remark}) implies that  the operator 
$H_{R_{U}(\phi)}^{U}$ is not compact on $A^2(U).$ 
\end{example}

\begin{example}
This example shows that pseudoconvexity of the domain is necessary 
(for more information on pseudoconvexity see \cite{KrantzBook,RangeBook}).
In  \cite{CelikSahutoglu} \c{C}elik and the author constructed an annulus
type domain $\D\subset \C^3$ (that is, $\D=\D_1\setminus \Dc_2$ where 
$\Dc_2\subset \D_1,$ and $\D_1$ and $\D_2$ are smooth bounded pseudoconvex
domains) such that $H_{\phi}^{\D}$ is compact on $A^2(\D)$ for all 
$\phi\in C(\overline{\D}).$ However, they show that  there exist $p\in b\D,$ on
the inner boundary of $\D,$ and $r>0$ such that $U=\D \cap B(p,r)$ is a convex
domain and there exists a disc through $p$ in the boundary of $U.$ Hence $N^U$
is not compact (see \cite[Theorem 1.1]{FuStraube98}). Furthermore, 
there exists  $\phi\in C^{\infty}(\overline{U})$ such that  $H^{U}_{\phi}$ is
not compact on $A^2(U)$ because, on a convex domain $V$, the Hankel operator
$H^V_{\phi}$ is compact for all $\phi \in C^{\infty}(\overline{V})$ if and only
if $N^V$ is compact (see\cite{FuStraube98}). 
\end{example}

\section*{Proof of Theorem \ref{ThmMain} and Proposition \ref{PropInC}}
We use the $\dbar$-Neumann problem in the proof of Theorem \ref{ThmMain}.
Let $\D$ be a bounded pseudoconvex domain in $\C^n$ and 
$\Box^{\D}=\dbar\dbar^*+\dbar^*\dbar$ be defined on square integrable $(0,1)$-forms,
$L_{(0,1)}^{2}(\D),$ where $\dbar^*$ is the Hilbert space adjoint of $\dbar.$ 
Kohn \cite{Kohn63} and H\"ormander \cite{Hormander65} showed that
(since $\D$ is a pseudoconvex domain) $\Box$ has a solution operator, denoted by
$N^{\D},$ on $L_{(0,1)}^{2}(\D).$ Kohn \cite{Kohn63} also showed that 
$P^{\D}=I-\dbar^* N^{\D}\dbar.$ Therefore, $H^{\D}_{\phi}(f)=\dbar^*N^{\D}(f\dbar\phi)$
for  $f\in A^2(\D)$ and $\phi\in C^1(\Dc).$ We note that $H^{\D}_{\phi}(f)$ is 
the canonical solution for  $\dbar u=f\dbar\phi.$ That is, $H^{\D}_{\phi}(f)$ is the
solution that is orthogonal to $A^2(\D)$ (or equivalently, it is the solution with the 
smallest norm in $L^2(\D)$). We refer the reader to \cite{ChenShawBook,StraubeBook} and
\cite{CuckovicSahutoglu09} (and references therein) for more information about the
$\dbar$-Neumann problem and compactness of Hankel operators on Bergman spaces.  

We use a series of Lemmas for the proof of Theorem \ref{ThmMain}. 
We note that the following Lemma is an immediate corollary of 
\cite[Proposition V.2.3]{D`AngeloIneqBook} (see also \cite[Lemma 4.3]{StraubeBook}). 
\begin{lemma}\label{LemCompEstimate}
Let $T:X\to Y$ be a linear operator between two Hilbert spaces $X$ and $Y$. Then
$T$ is compact if and only if  for every $\varepsilon>0$ there exist a compact
operator $K_{\varepsilon}:X\to Y$ so that 
\[\|T(h)\|_Y\leq \varepsilon\|h\|_X+\|K_{\varepsilon}(h)\|_Y \text{ for }
h\in X.\]
\end{lemma}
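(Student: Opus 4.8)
The plan is to prove the two implications separately, observing that the forward direction is immediate and that essentially all the content lies in the converse. If $T$ is compact, then choosing $K_\varepsilon = T$ for every $\varepsilon > 0$ makes the asserted inequality read $\|T(h)\|_Y \le \varepsilon\|h\|_X + \|T(h)\|_Y$, which holds trivially; so nothing needs to be done in that direction.

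For the converse, I would suppose that for each $\varepsilon > 0$ there is a compact operator $K_\varepsilon$ satisfying the stated estimate, and first record that $T$ is bounded: applying the hypothesis with $\varepsilon = 1$ gives $\|T(h)\|_Y \le \|h\|_X + \|K_1(h)\|_Y \le (1+\|K_1\|)\|h\|_X$, so $T$ is bounded. The key tool is then the sequential characterization of compactness that is available because $X$ is a Hilbert space, hence reflexive: a bounded operator is compact precisely when it carries weakly convergent sequences to norm convergent ones, and every bounded sequence in $X$ has a weakly convergent subsequence by the Banach--Alaoglu theorem.

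Concretely, I would take a bounded sequence $\{h_n\}$ in $X$, pass to a subsequence (still denoted $h_n$) with $h_n \rightharpoonup h$ weakly, and set $g_n = h_n - h$, so that $g_n \rightharpoonup 0$ and $M := \sup_n \|g_n\|_X < \infty$. Fixing $\eta > 0$ and choosing $\varepsilon = \eta/(2M)$ (the case $M=0$ being trivial) produces a compact $K_\varepsilon$ with
\[\|T(g_n)\|_Y \le \varepsilon\|g_n\|_X + \|K_\varepsilon(g_n)\|_Y \le \frac{\eta}{2} + \|K_\varepsilon(g_n)\|_Y.\]
Since $K_\varepsilon$ is compact and $g_n \rightharpoonup 0$, we have $\|K_\varepsilon(g_n)\|_Y \to 0$, and therefore $\limsup_n \|T(g_n)\|_Y \le \eta/2 < \eta$. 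As $\eta > 0$ is arbitrary, $\|T(g_n)\|_Y \to 0$, that is $T(h_n) \to T(h)$ in $Y$. Thus every bounded sequence in $X$ has an image under $T$ with a norm convergent subsequence, and $T$ is compact.

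I do not expect a genuine obstacle here: the estimate is designed precisely so that the ``$\varepsilon$-part'' is controlled by the uniform bound $M$ on the weakly null sequence while the compact ``$K_\varepsilon$-part'' is annihilated by weak convergence. The only points requiring care are the preliminary boundedness of $T$, so that the images $T(h_n)$ form a bounded sequence to begin with, and the appeal to reflexivity of the Hilbert space $X$ to extract the weakly convergent subsequence; both are standard.
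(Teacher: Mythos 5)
Your proof is correct: the forward direction with $K_{\varepsilon}=T$ is trivial, and the converse via weak sequential compactness of bounded sets in a Hilbert space together with the complete continuity of each $K_{\varepsilon}$ (so that $\|K_{\varepsilon}(g_n)\|_Y\to 0$ for weakly null $g_n$) is exactly the standard argument. The paper itself gives no proof of this lemma --- it simply cites it as an immediate corollary of \cite[Proposition V.2.3]{D`AngeloIneqBook} (see also \cite[Lemma 4.3]{StraubeBook}) --- and your argument is essentially the one given in those references, so there is nothing substantive to compare.
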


In the proof of Theorem \ref{ThmMain} we will need to apply Lemma
\ref{LemCompEstimate} in the following set-up.

\begin{lemma} \label{LemCompactH}
Let $\D$ be a bounded pseudoconvex domain in $\C^n,$ $\phi\in C^1(\Dc),$ and 
$X_{\phi}(\D)$ be the closure of $\{f\dbar \phi \in L^2_{(0,1)}(\D): f\in A^{2}(\D)\} $
in $L^2_{(0,1)}(\D).$ Then 
$H^{\D}_{\phi}$ is compact on $A^2(\D)$ if and only if for every
$\varepsilon>0$ there exists a compact operator 
$K_{\varepsilon}:X_{\phi}(\D)\to L^{2}(\D)$  such that 
\begin{equation}\label{CompEst}
\|\dbar^{*}N^{\D}(f\dbar\phi)\|\leq \varepsilon\|f\dbar
\phi\|+\|K_{\varepsilon}(f\dbar \phi)\| \text{ for all } f\in A^{2}(\D). 
\end{equation}
\end{lemma}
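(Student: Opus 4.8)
The plan is to prove Lemma \ref{LemCompactH} as a direct application of the characterization in Lemma \ref{LemCompEstimate}, recognizing that $H^{\D}_{\phi}$ factors through the map $f \mapsto f\dbar\phi$ followed by the canonical solution operator $\dbar^* N^{\D}$. First I would recall the key identity stated just above the lemma: for $\phi \in C^1(\Dc)$ and $f \in A^2(\D)$, we have $H^{\D}_{\phi}(f) = \dbar^* N^{\D}(f\dbar\phi)$. Thus, if we define $S: X_{\phi}(\D) \to L^2(\D)$ by $S(g) = \dbar^* N^{\D}(g)$, then $H^{\D}_{\phi}$ is the composition of the bounded linear map $\iota: A^2(\D) \to X_{\phi}(\D)$, $\iota(f) = f\dbar\phi$, with $S$ restricted to the image. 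The whole point is to transfer the compactness characterization from the operator $H^{\D}_{\phi}$ on $A^2(\D)$ to the operator $S$ on the space $X_{\phi}(\D)$.

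The main step is to verify the equivalence in both directions. For the backward direction, suppose the estimate \eqref{CompEst} holds: given $\varepsilon > 0$, there is a compact $K_{\varepsilon}: X_{\phi}(\D) \to L^2(\D)$ with $\|\dbar^* N^{\D}(f\dbar\phi)\| \leq \varepsilon \|f\dbar\phi\| + \|K_{\varepsilon}(f\dbar\phi)\|$ for all $f \in A^2(\D)$. Since $\|f\dbar\phi\| \leq \|\dbar\phi\|_{L^\infty(\D)} \|f\|_{A^2(\D)}$ and $\phi \in C^1(\Dc)$ guarantees $\|\dbar\phi\|_{L^\infty(\D)} < \infty$, we absorb this bounded multiplicative constant into $\varepsilon$; composing $K_{\varepsilon}$ with the bounded map $\iota$ yields a compact operator $\widetilde{K}_{\varepsilon} = K_{\varepsilon} \circ \iota: A^2(\D) \to L^2(\D)$, and applying Lemma \ref{LemCompEstimate} to $T = H^{\D}_{\phi}$ shows $H^{\D}_{\phi}$ is compact. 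For the forward direction, assume $H^{\D}_{\phi}$ is compact. By Lemma \ref{LemCompEstimate}, for every $\varepsilon > 0$ there is a compact $\widetilde{K}_{\varepsilon}: A^2(\D) \to L^2(\D)$ with $\|H^{\D}_{\phi}(f)\| \leq \varepsilon \|f\| + \|\widetilde{K}_{\varepsilon}(f)\|$. The task is to reexpress the right-hand side in terms of $f\dbar\phi$ rather than $f$, that is, to produce a compact operator defined on $X_{\phi}(\D)$.

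The hard part will be handling the forward direction cleanly, since the map $\iota: f \mapsto f\dbar\phi$ need not be injective (it fails to be injective precisely where $\dbar\phi$ vanishes on an open set relative to the support structure of $A^2(\D)$), so one cannot simply invert it to define $K_{\varepsilon}$ on the image. The natural remedy is to work on the orthogonal complement of the kernel: let $N = \ker \iota \subseteq A^2(\D)$, and let $Q$ denote orthogonal projection onto $N^{\perp}$ in $A^2(\D)$. On $N^{\perp}$ the restriction $\iota|_{N^{\perp}}$ is injective with dense range in $X_{\phi}(\D)$, and one checks that $H^{\D}_{\phi}$ annihilates $N$ (because $f \in N$ means $f\dbar\phi = 0$, whence $H^{\D}_{\phi}(f) = \dbar^* N^{\D}(f\dbar\phi) = 0$). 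I would then define $K_{\varepsilon}$ on the dense subset $\iota(A^2(\D)) \subseteq X_{\phi}(\D)$ by $K_{\varepsilon}(f\dbar\phi) = \widetilde{K}_{\varepsilon}(Qf)$, verify this is well-defined (independent of the representative $f$, using that $f\dbar\phi = f'\dbar\phi$ forces $Qf = Qf'$), bounded, and extends to a compact operator on all of $X_{\phi}(\D)$. The estimate then follows since $\|H^{\D}_{\phi}(f)\| = \|H^{\D}_{\phi}(Qf)\| \leq \varepsilon\|Qf\| + \|\widetilde{K}_{\varepsilon}(Qf)\|$ and $\|Qf\| \leq C\|f\dbar\phi\|$ on $N^{\perp}$ provided $\iota|_{N^{\perp}}$ is bounded below; establishing that lower bound, or otherwise arguing around its potential failure by absorbing terms, is the genuine technical point requiring care.
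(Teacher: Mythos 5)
Your backward direction is correct and is essentially the paper's: compose $K_{\varepsilon}$ with the bounded map $\iota\colon f\mapsto f\dbar\phi$, absorb $\|\dbar\phi\|_{L^{\infty}(\D)}$ into $\varepsilon$, and apply Lemma \ref{LemCompEstimate} to $T=H^{\D}_{\phi}$.

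The forward direction as you outline it has a genuine gap, and it is exactly the one you flag in your last sentence. Your definition $K_{\varepsilon}(f\dbar\phi)=\widetilde{K}_{\varepsilon}(Qf)$ requires $\iota|_{N^{\perp}}$ to be bounded below, i.e.\ $\|Qf\|\leq C\|f\dbar\phi\|$. A bounded injective Hilbert space operator is bounded below if and only if its range is closed, and here the range $\iota(A^{2}(\D))=\{f\dbar\phi: f\in A^{2}(\D)\}$ is by construction only \emph{dense} in $X_{\phi}(\D)$ and in general not closed; so no such $C$ exists, your $K_{\varepsilon}$ need not even be bounded on the dense subspace, and there is no way to ``absorb'' the failure: via Lemma \ref{LemCompEstimate} and continuity, \eqref{CompEst} is equivalent to compactness of $\dbar^{*}N^{\D}$ on all of $X_{\phi}(\D)$, a statement about the closure that cannot be manufactured by pushing an operator forward through the non-invertible map $\iota$. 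The paper avoids any inversion: it applies Lemma \ref{LemCompEstimate} with $T=\dbar^{*}N^{\D}$ and $X=X_{\phi}(\D)$, so that the compact operator it produces already lives on $X_{\phi}(\D)$ and \eqref{CompEst} is just the resulting estimate evaluated at $h=f\dbar\phi$. The one thing that must be checked is that $\dbar^{*}N^{\D}$ restricted to $X_{\phi}(\D)$ is compact, which the paper deduces from compactness of $H^{\D}_{\phi}$ (giving compactness of $\dbar^{*}N^{\D}$ on the dense subset $\{f\dbar\phi\}$) together with boundedness of $\dbar^{*}N^{\D}$ and a density argument. Note that the phenomenon you identified --- sequences with $\|f_{n}\dbar\phi\|$ bounded but $\|f_{n}\|$ unbounded --- is precisely the point one must address in that density step as well, so your instinct located the real difficulty; the remedy is to reformulate the argument so that only qualitative compactness of $\dbar^{*}N^{\D}$ on the dense set is needed, rather than a quantitative lower bound for $\iota$, which is false.
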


\begin{proof} Assume that $H^{\D}_{\phi}$ is compact on $A^{2}(\D).$ Then 
$\dbar^{*}N^{\D}$ is compact on a dense subset of $X_{\phi}(\D)$ which implies that it is 
 compact on  $X_{\phi}(\D).$  Then applying Lemma \ref{LemCompEstimate} with 
$T= \dbar^{*}N^{\D}$ and $X=X_{\phi}(\D)$ we get the
following estimate: for every $\varepsilon>0$ there exists a compact operator 
$K_{\varepsilon}:X_{\phi}(\D) \to L^{2}(\D)$ so that 
\[\|\dbar^{*}N^{\D}(f\dbar \phi)\|\leq \varepsilon\|f\dbar \phi\|
+\|K_{\varepsilon}(f\dbar \phi)\| \textrm{ for } f \in A^2(\D).\]
On the other hand, if we assume that we have \eqref{CompEst} then Lemma
\ref{LemCompEstimate} implies that $\dbar^{*}N^{\D}$ is a compact operator on
$X_{\phi}(\D).$ Hence,  $H^{\D}_{\phi}$ is compact on $A^2(\D).$ This completes
the proof of Lemma \ref{LemCompactH}.
\end{proof}

The following famous theorem of H\"{o}rmander \cite[Theorem 4.4.2]{HormanderBook} will be
used.
\begin{theorem*}[H\"{o}rmander]  
Let $\D$ be a pseudoconvex domain in $\C^{n}$ and $\psi$ be a continuous
plurisubharmonic function on $\D$. Assume that 
$u =\sum_{j=1}^{n}u_{j}d\bar z_{j} \in L^{2}_{(0,1)}(\D,e^{-\psi})$
such that $\dbar u=0$. Then there exists  $f \in L^{2}(\D,e^{-\psi})$
such that $\dbar f=u$ and 
\[\int_{\D}\frac{|f(z)|^{2}}{(1+\sum_{j=1}^n|z_j|^{2})^{2}}e^{-\psi(z)}d\lambda(z)
\leq \int_{\D}\sum_{j=1}^{n}|u_{j}(z)|^{2}e^{-\psi(z)}d\lambda(z)\]
where $z=(z_1,\dots,z_n)\in \C^n.$
\end{theorem*}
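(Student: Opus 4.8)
The plan is to obtain solvability of $\dbar f=u$ together with the weighted estimate from Hörmander's functional-analytic duality, applied to a single auxiliary weight that already carries the factor $(1+\sum_j|z_j|^2)^{-2}$. Write $\rho(z)=1+\sum_{j=1}^{n}|z_j|^2$, $\kappa=\log\rho$, and set $\chi=\psi+2\kappa$. The computation that drives everything is the complex Hessian of $\kappa$: using $\partial^2\kappa/\partial z_j\partial\bar z_k=\delta_{jk}/\rho-\bar z_jz_k/\rho^2$ and Cauchy--Schwarz one gets
\[\sum_{j,k=1}^{n}\frac{\partial^2\kappa}{\partial z_j\partial\bar z_k}(z)\,w_j\bar w_k\ \ge\ \frac{\sum_{j}|w_j|^2}{\rho(z)^2}\qquad(w\in\C^n).\]
Since $\psi$ is plurisubharmonic its complex Hessian is nonnegative, so the Hessian of $\chi$ is bounded below by $2\rho^{-2}$ times the identity. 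If $\int_\D\sum_j|u_j|^2e^{-\psi}\,d\lambda=\infty$ there is nothing to prove, so we assume this quantity is finite.

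Next comes the duality step. View $T=\dbar\colon L^2(\D,e^{-\chi})\to L^2_{(0,1)}(\D,e^{-\chi})$ and $S=\dbar\colon L^2_{(0,1)}(\D,e^{-\chi})\to L^2_{(0,2)}(\D,e^{-\chi})$ as closed, densely defined operators; then $ST=0$, so $\operatorname{range}T\subseteq\ker S$, and $u\in\ker S$ because $\dbar u=0$. By the standard lemma underlying Hörmander's method it suffices to produce a constant $C$ with $|\langle u,v\rangle_\chi|^2\le C\,(\|T^\ast v\|^2+\|Sv\|^2)$ for all $v\in\operatorname{Dom}(T^\ast)\cap\operatorname{Dom}(S)$; one then obtains $f$ with $\dbar f=u$ and $\|f\|_\chi^2\le C$. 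Splitting $v$ into its component in $\ker S$ and its orthogonal part (which lies in $\ker T^\ast$ since $\operatorname{range}T\subseteq\ker S$, and is annihilated by $\langle u,\cdot\rangle_\chi$) reduces matters to $v\in\ker S$, where $Sv=0$. For such $v$, applying Cauchy--Schwarz to $\langle u,v\rangle_\chi$ with $e^{-\chi}=(\rho\,e^{-\chi/2})(\rho^{-1}e^{-\chi/2})$ gives
\[|\langle u,v\rangle_\chi|^2\ \le\ \Big(\int_\D\sum_j|u_j|^2\rho^{2}e^{-\chi}\,d\lambda\Big)\Big(\int_\D\frac{\sum_j|v_j|^2}{\rho^{2}}\,e^{-\chi}\,d\lambda\Big).\]

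The remaining input is the a priori (Morrey--Kohn--Hörmander) estimate
\[\int_\D\frac{\sum_j|v_j|^2}{\rho^{2}}\,e^{-\chi}\,d\lambda\ \le\ \tfrac12\big(\|Sv\|^2+\|T^\ast v\|^2\big)\qquad(v\in\operatorname{Dom}(T^\ast)\cap\operatorname{Dom}(S)).\]
On a smooth bounded pseudoconvex domain with smooth weight $\chi$ the integration-by-parts identity expresses $\|\dbar v\|^2+\|T^\ast v\|^2$ (with $T^\ast$ the $\chi$-weighted $\dbar^\ast$) as $\sum_{j,k}\int_\D|\partial v_j/\partial\bar z_k|^2e^{-\chi}+\sum_{j,k}\int_\D\chi_{j\bar k}v_j\bar v_k e^{-\chi}$ plus a boundary integral of the Levi form against $v$; for $v\in\operatorname{Dom}(\dbar^\ast)$ the form is complex-tangential on $b\D$, so pseudoconvexity makes the boundary term nonnegative. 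Dropping the two nonnegative terms and inserting the Hessian bound $\chi_{j\bar k}w_j\bar w_k\ge 2\rho^{-2}\sum_j|w_j|^2$ yields the displayed estimate. Combined with the previous display (and $Sv=0$) this gives $C=\tfrac12\int_\D\sum_j|u_j|^2\rho^{2}e^{-\chi}\,d\lambda$, whence $\|f\|_\chi^2\le\tfrac12\int_\D\sum_j|u_j|^2\rho^{2}e^{-\chi}\,d\lambda$; since $e^{-\chi}=\rho^{-2}e^{-\psi}$, this reads $\int_\D|f|^2\rho^{-2}e^{-\psi}\,d\lambda\le\tfrac12\int_\D\sum_j|u_j|^2e^{-\psi}\,d\lambda$, which is even sharper than the claim.

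Finally, the identity and the sign of the boundary term presuppose a smooth bounded pseudoconvex domain, a smooth weight, and the density of forms smooth up to $b\D$ in the graph norm of $\dbar\oplus T^\ast$. To descend to a general pseudoconvex $\D$ with merely continuous plurisubharmonic $\psi$, I would exhaust $\D$ by smooth bounded pseudoconvex subdomains $\D_\nu\Subset\D$, regularize $\psi$ by smooth plurisubharmonic $\psi_\nu$ decreasing to $\psi$, prove the estimate on each $\D_\nu$ with weight $\chi_\nu=\psi_\nu+2\kappa$ with a uniform constant, and then pass to the limit: the solutions are uniformly bounded in $L^2_{\mathrm{loc}}$, a weak limit $f$ solves $\dbar f=u$ on $\D$, and monotone convergence promotes the integral bounds to $\D$ and to $\psi$. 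I expect the principal obstacles to be the a priori estimate, specifically the correct sign of the Levi boundary term together with the density of nice forms in the graph norm, and the approximation passage to nonsmooth data; by contrast the duality argument and the Hessian computation are routine.
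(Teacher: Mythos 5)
This statement is H\"ormander's classical theorem; the paper quotes it verbatim from \cite[Theorem 4.4.2]{HormanderBook} and gives no proof, so there is no in-paper argument to compare against. Judged on its own, your proposal is a correct reconstruction of the standard $L^2$-method proof: the Hessian computation for $\kappa=\log(1+|z|^2)$, the choice $\chi=\psi+2\kappa$, the splitting $e^{-\chi}=(\rho e^{-\chi/2})(\rho^{-1}e^{-\chi/2})$ in the Cauchy--Schwarz step, and the reduction to $v\in\ker S$ are all right, and the bookkeeping $e^{-\chi}=\rho^{-2}e^{-\psi}$ correctly turns the weighted bound into the stated one (indeed with the sharper constant $1/2$, matching Theorem 4.4.1 of \cite{HormanderBook}). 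The two points you flag as the real work --- density of forms smooth up to the boundary in the graph norm of $\dbar\oplus\dbar^*$, and the descent from smooth bounded subdomains with smooth weights to a general pseudoconvex $\D$ with continuous plurisubharmonic $\psi$ --- are exactly the standard technical lemmas, and your exhaustion/regularization/weak-limit scheme for the latter is sound. One structural difference from H\"ormander's own proof is worth noting: he never exhausts by smooth subdomains and never sees the Levi boundary term; instead he works on $\D$ itself with a tower of three weights chosen so that compactly supported smooth forms are dense in the joint graph norm, which eliminates the boundary integral. Your route (Morrey--Kohn--H\"ormander identity with a nonnegative boundary term on smooth bounded pseudoconvex exhausting domains) is the one taken in \cite{ChenShawBook} and \cite{StraubeBook}; both are standard and both work. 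A last pedantic remark: the conclusion as transcribed asserts $f\in L^2(\D,e^{-\psi})$, while the displayed estimate (and your proof) only controls $\int_\D|f|^2(1+|z|^2)^{-2}e^{-\psi}\,d\lambda$; these agree when $\D$ is bounded, which is the only case the paper uses.
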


We  include the following  standard Lemma and its proof for convenience of the
reader. 
\begin{lemma} \label{Lem2} 
Let $\D$ be a bounded pseudoconvex domain in $\C^n, B(p,r)$ be the ball
centered at $p\in b\D$ with radius $r,$ and $\D(p,r)=B(p,r)\cap \D.$ 
For $\varepsilon>0$ and $0<\delta<r$ there exists a bounded operator
$E_{\varepsilon,\delta}:A^2(\D(p,r))\to A^{2}(\D)$ such that  
\begin{equation*}
\|f-E_{\varepsilon,\delta}(f)\|_{L^{2}(\D(p,r-\delta))} \leq \varepsilon
\|f\|_{L^{2}(\D(p,r-\delta))} \text{ for } f\in A^{2}(\D(p,r)). 
\end{equation*}

\end{lemma}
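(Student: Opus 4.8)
The plan is to build $E_{\varepsilon,\delta}(f)$ by the standard cut-off-and-correct scheme, using the H\"ormander estimate quoted just above to force the correction term to be small exactly on $\D(p,r-\delta)$. First I would fix a cut-off $\chi\in C_0^\infty(B(p,r))$ with $0\le\chi\le 1$, $\chi\equiv 1$ on $B(p,r-\delta/2)$, and $\operatorname{supp}\chi\Subset B(p,r)$. For $f\in A^2(\D(p,r))$ the function $\chi f$, extended by zero to $\D\setminus B(p,r)$, is a well-defined $L^2$ function on $\D$ that agrees with $f$ on $\D(p,r-\delta)$, and $\dbar(\chi f)=f\dbar\chi$ is a $\dbar$-closed $(0,1)$-form whose support $A=\operatorname{supp}\dbar\chi$ is a compact subset of $\{|z-p|\ge r-\delta/2\}\cap\D$, hence bounded away from $\overline{B(p,r-\delta)}$.

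Next I would solve $\dbar v=f\dbar\chi$ on the pseudoconvex domain $\D$ with the continuous plurisubharmonic weight $\psi_t(z)=t|z-p|^2$, taking $v=v_t$ to be the solution of minimal weighted $L^2$-norm so that $f\mapsto v_t$ is linear, and set $E_{\varepsilon,\delta}(f)=\chi f-v_t$. Then $\dbar E_{\varepsilon,\delta}(f)=f\dbar\chi-f\dbar\chi=0$; since $\D$ is bounded, $e^{-\psi_t}$ is bounded above and below by positive constants on $\D$, so the weighted and unweighted $L^2$ spaces coincide with comparable norms. Consequently $E_{\varepsilon,\delta}(f)\in A^2(\D)$ and, for each fixed $t$, $E_{\varepsilon,\delta}\colon A^2(\D(p,r))\to A^2(\D)$ is linear and bounded.

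The estimate is where the weight does the work. On $\D(p,r-\delta)$ one has $e^{-\psi_t(z)}\ge e^{-t(r-\delta)^2}$ and $(1+|z|^2)^{-2}\ge M^{-1}$ with $M=\sup_{z\in\D}(1+|z|^2)^2<\infty$, so the left-hand side of H\"ormander's inequality is bounded below by $M^{-1}e^{-t(r-\delta)^2}\int_{\D(p,r-\delta)}|v_t|^2\,d\lambda$. On $A$ one has $|z-p|\ge r-\delta/2$, hence $e^{-\psi_t(z)}\le e^{-t(r-\delta/2)^2}$, so the right-hand side is at most $C_\chi\,e^{-t(r-\delta/2)^2}\|f\|_{L^2(\D(p,r))}^2$ with $C_\chi=\sup|\dbar\chi|^2$. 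Combining the two inequalities gives
\[\int_{\D(p,r-\delta)}|v_t|^2\,d\lambda\le M\,C_\chi\,e^{-t\kappa}\,\|f\|_{L^2(\D(p,r))}^2,\qquad \kappa=(r-\delta/2)^2-(r-\delta)^2>0,\]
and since $\chi\equiv 1$ on $\D(p,r-\delta)$ the left side equals $\|f-E_{\varepsilon,\delta}(f)\|_{L^2(\D(p,r-\delta))}^2$. Choosing $t$ so large that $M\,C_\chi\,e^{-t\kappa}\le\varepsilon^2$ yields the claimed bound.

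The hard part is the geometric bookkeeping that produces the exponential gain: one must secure a strictly positive gap $\kappa$ between the region $\overline{B(p,r-\delta)}$, where $v_t$ is to be small, and the annulus $A$ carrying the data $f\dbar\chi$, which is precisely why $\chi$ is required to equal $1$ on the strictly larger ball $B(p,r-\delta/2)$. I expect the only other points needing care to be the verification that the minimal-norm H\"ormander solution depends linearly and boundedly on $f$, and the fact that the correction is naturally controlled by the full local norm $\|f\|_{L^2(\D(p,r))}$.
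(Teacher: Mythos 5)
Your construction is essentially the paper's own proof: cut off with $\chi$, solve $\dbar v=f\dbar\chi$ by H\"ormander's theorem with a weight that is exponentially larger on the inner ball than on $\operatorname{supp}\dbar\chi$, and let the parameter tend to infinity (your weight $e^{-t|z-p|^2}$ is the paper's $e^{-k\psi}$ with $\psi(z)=-r_2^2+|z-p|^2$ up to the harmless constant factor $e^{kr_2^2}$, and your gap $\kappa$ is exactly the sign change of $\psi$ that the paper exploits). The one discrepancy is that your final estimate has $\varepsilon\|f\|_{L^2(\D(p,r))}$ on the right, whereas the lemma as stated asks for $\varepsilon\|f\|_{L^2(\D(p,r-\delta))}$; since the data $f\dbar\chi$ lives on an annulus disjoint from $\D(p,r-\delta)$, the cut-off construction cannot produce the smaller norm uniformly in $f$, and indeed the paper's own passage from its inequality \eqref{Eqn2} to the stated bound has the same feature. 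This weaker form with the full norm $\|f\|_{L^2(\D(p,r))}$ is exactly what is invoked in the proof of Theorem \ref{ThmMain}, so nothing is lost; your remaining caveats (linearity and boundedness of the canonical weighted solution, with the factor $\sup_\D(1+|z|^2)^2$ absorbed into the constant) are routine.
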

The following proof will use H\"{o}rmander's Theorem in a similar fashion as in the 
proof of \cite[TheoremVI.3]{JupiterThesis} where Jupiter shows that a pseudoconvex
domain in $\C^n$ is a Runge domain  if and only if it is polynomially convex.

\begin{proof}[Proof of Lemma \ref{Lem2}]
The crucial step in the proof is constructing a sequence of weight functions that will
allow us to get the desired norm estimates. To that end, let us choose positive numbers
$\delta,r_1,$ and $r_2$ so that $0<r-\delta=r_{1}<r_{2}<r$ and define a function $\psi$ 
as 
\[\psi(z)=-r_{2}^{2}+\sum_{j=1}^n|z_j-p_j|^{2}\] 
 where $z=(z_1,\ldots,z_n)\in \C^n.$ 
Furthermore, we choose a smooth cut-off function $\chi\in C^{\infty}_{0}(B(p,r))$   such
that $\chi \equiv 1 $ in a neighborhood of  $\overline{B(p,r_{2})}.$ We note
that $\psi$ is a continuous plurisubharmonic function  on $\C^{n}$ that satisfies 
the following crucial property: $\psi(z)<0$ for  $z\in B(p,r_{2})$ and $\psi(z)>0$ for
$z\in \C^{n}\setminus \overline{B(p,r_{2})}.$ Since $\psi$ is bounded on $\D,$ the Hilbert
spaces  $L^{2}(\D)$ and $L^{2}(\D, e^{-k\psi})$ are equal for all $k$ as sets. Then 
H\"{o}rmander's Theorem  implies that for every $k$ there exists
$u_{k}\in L^{2}(\D)$ such that $\dbar u_{k}=f\dbar \chi$ with 
\begin{align}\label{Eqn1}
\int_{\D}|u_{k}(z)|^{2}e^{-k\psi(z)}d\lambda(z)\leq 
C\int_{\D}|f(z)|^{2}\sum_{j=1}^{n}\left|\frac{\partial
\chi(z)}{\partial \bar z_{j}}\right|^{2} e^{-k\psi(z)} d\lambda(z)
\end{align} 
where $C$ is a positive real number that depends only on $\D.$  
We note that  $\psi <- r_2^2+r_1^2<0$ on $B(p,r_{1})$ and $\psi$ is strictly
positive on a neighborhood of the support of the $\dbar \chi$. Hence the right hand side
of \eqref{Eqn1} goes to zero as $k$ goes to infinity and we have 
\begin{align}\nonumber 
\int_{\D\cap B(p,r_{1})}|u_{k}(z)|^{2} d\lambda(z) &\leq
\int_{\D}|u_{k}(z)|^{2}e^{-k\psi(z)}d\lambda(z) \\
\label{Eqn2}&\leq C \int_{\D}|f(z)|^{2}\sum_{j=1}^{n}\left|
\frac{\partial \chi(z)}{\partial \bar z_{j}}\right|^{2}
e^{-k\psi(z)}d\lambda(z). 
\end{align}
Then  depending on $\varepsilon$ and $\delta$ (and using \eqref{Eqn2}) we can choose
$C_{\varepsilon,\delta}>0$ and $k$  so that 
$\|u_{k}\|_{L^{2}(\D(p,r_1))} \leq \varepsilon \|f\|_{L^{2}(\D(p,r_1))}$
and $\|u_{k}\|_{L^{2}(\D)} \leq C_{\varepsilon,\delta}\|f\|_{L^{2}(\D(p,r))}.$
Therefore, we can define  $E_{\varepsilon,\delta}$  as 
$E_{\varepsilon,\delta}(f)=\chi f-u_{k}.$ 
\end{proof}

Now we are ready to prove Theorem \ref{ThmMain}.

\begin{proof}[Proof of Theorem \ref{ThmMain}] 
To simplify the notation in this proof we will denote the norm $\|.\|_{L^2(U)}$ 
by $\|.\|$ and  the operator $H^U_{R_U(\phi)}$ by $H^U_{\phi}.$  We note that
$\langle .,.\rangle $ denotes the inner product on $U$ and $A\lesssim B$ means
that $A\leq cB$ for some constant $c$ that is independent of the parameters of interest
and its value can change at every appearance. For $f\in A^{2}(U)$ we have
 \begin{align*}
 \| H^{U}_{\phi} (f)\|^{2}=&\langle \dbar^{*}N^U(f\dbar\phi),\dbar^{*}N^U(f\dbar
\phi) \rangle \\
 =&\langle f\dbar\phi  ,N^U\dbar\dbar^{*}N^U(f\dbar\phi)  \rangle \\
=&\langle f\dbar\phi, N^U(f\dbar\phi)\rangle. 
\end{align*}
In the last equality above we used the facts that
$N^U(\dbar\dbar^{*}+\dbar^{*}\dbar)=I$ and $\dbar N^U \dbar=0.$ Now we will construct a
smooth bounded function $\lambda$ that has a large Hessian on the boundary of the ball
$B(p,r).$  Let $\gamma:\mathbb{R}\to\mathbb{R}$ be a smooth, non-decreasing, convex
function  such that $-1\leq \gamma(t)\leq 0$ for $t\leq 0,\gamma(0)=0,$ and
$\gamma'(0)\geq 2.$ Furthermore, let us define 
\[\rho_{\varepsilon}(z)=\frac{1}{\varepsilon}\left(-r^2+\sum_{j=1}^n|z_j-p_j|^2\right)\]
for  $r,\varepsilon>0$  and $\psi_{\varepsilon} (z)= \gamma(\rho_{\varepsilon}(z)).$ 
Then one can check that $\psi_{\varepsilon}$ is a smooth plurisubharmonic function on
$\C^n,$  such that $-1\leq\psi_{\varepsilon}(z) \leq  0$ for $z\in B(p,r).$ 
Also, by continuity, there exists $\delta>0$ such that 
\[\sum_{j,k=1}^n \frac{\partial^2 \psi_{\varepsilon} (z)}{\partial z_j\partial \overline
z_k} w_j\overline w_k \geq \frac{1}{\varepsilon}\sum_{j=1}^n|w_j|^{2}\] 
for $z\in K=\overline{B(p,r)\setminus B(p,r-\delta)}$ and 
$(w_1,\ldots w_n)\in \C^{n}.$ 
Then (ii) in \cite[Corollary 2.13]{StraubeBook} implies that 
\begin{align} \nonumber 
\frac{1}{e \varepsilon}\int_{K\cap U}|h(z)|^{2}d\lambda(z) &\leq \sum_{j,k=1}^n
\int_{U}e^{\psi_{\varepsilon}(z)} \frac{\partial^2 \lambda (z)}{\partial z_j\partial
\overline z_k} h_j(z)\overline{h_k(z)}d\lambda(z)\\
\label{EqnMain}& \leq \|\dbar h\|^{2}+\|\dbar^{*}h\|^{2} 
\end{align}
for $ h=\sum_{j=1}^nh_jd\overline z_j \in Dom(\dbar)\cap Dom(\dbar^{*}) \subset
L^{2}_{(0,1)}(U).$ Let $\chi\in C^{\infty}(\overline{B(p,r)})$ such that
$\chi\equiv 1$ on a neighborhood of  $bB(p,r),$ and $\chi\equiv 0$ on
$B(p,r-\delta).$ Then 
\begin{align*}
\|H^{U}_{\phi} (f)\|^{2}\leq & |\langle f\dbar\phi, \chi N^U(f\dbar\phi)\rangle|
+|\langle f\dbar\phi, (1-\chi)N^U(f\dbar\phi)\rangle| \\
\leq& \|f\dbar\phi\|  \|\chi N^U(f\dbar\phi)\|+|\langle (1-\chi)f\dbar\phi,
N^U(f\dbar\phi)\rangle|.
\end{align*}
Then \eqref{EqnMain} implies that
\begin{align*}
\|\chi N^U(f\dbar \phi)\|^2 &\lesssim \varepsilon \left(\|\dbar N^U(f\dbar
\phi)\|^2+\|\dbar^{*} N^U(f\dbar \phi)\|^2\right)\\
& \lesssim  \varepsilon \|f\|^2 
\end{align*}
for $ f\in A^2(U).$ Let us denote $\chi_{1}=1-\chi$ and choose  
$\widetilde{\chi} \in C^{\infty}_{0}(B(p,r))$ such that  $0 \leq \widetilde{\chi} \leq 1 $
and $\widetilde{\chi}\equiv 1$ on the support of $\chi_1.$ Then Lemma \ref{Lem2}
implies that there exists a bounded operator  
$E_{\varepsilon,\delta}:  A^{2}(U)\to A^{2}(\D)$ such that 
$\|\widetilde{\chi}(R_UE_{\varepsilon,\delta}(f)-f)\|\leq \varepsilon \|f\|.$
Since $\delta$ depends on $\varepsilon$ in the following calculation we
will use the following notation: 
$E_{\varepsilon}=E_{\varepsilon,\delta},F_{\varepsilon}=E_{\varepsilon}(f).$ 
Let $M_{\varepsilon}$ denote the norm of the operator $E_{\varepsilon}.$

We note that in the following inequalities $\dbar^*_{\D}$ and $\dbar^*$ denote
the Hilbert space adjoints of $\dbar$ on $\D$ and on $U$, respectively. A
$(0,1)$-form $f$ is in the domain of $\dbar^*$ if there exists a square integrable
function $g$ such that $\langle f,\dbar h\rangle=\langle g,h\rangle $ for all $h$ in the
domain of $\dbar.$ Furthermore, if a $(0,1)$-form $f=\sum_{j=1}^nf_jd\overline{z}_j$ is in
the domain of $\dbar^*$ then $\dbar^*f=-\sum_{j=1}^n \frac{\partial f_j}{\partial z_j}$
in the  sense of distributions (see Chapter 4.2 in \cite{ChenShawBook} for more
information). The fact that $\dbar^*N$ is a solution operator
for $\dbar$ (that is, $\dbar\dbar^*Nf=f$ if $f$ is a $\dbar$-closed form) implies
that $ F_{\varepsilon} \dbar \phi=\dbar(F_{\varepsilon}\phi)=\dbar
\dbar^*_{\D}N^{\D}F_{\varepsilon} \dbar\phi.$ We will use this equality  as
well as the Cauchy-Schwarz inequality to pass from the first line to the second
line below.
\begin{align*}
 |\langle \chi_{1}(f\dbar\phi),  N^U(f\dbar\phi) \rangle|
&\leq |\langle \chi_{1}  (f-F_{\varepsilon})\dbar\phi, N^U(f\dbar\phi)\rangle|
+|\langle \chi_{1}   F_{\varepsilon}\dbar \phi, N^U(f\dbar\phi)\rangle|   \\
&\lesssim \|\chi_{1} (f-F_{\varepsilon})\|\|f\| +|\langle \chi_{1}
\dbar\dbar^{*}_{\D}N^{\D} (F_{\varepsilon}\dbar\phi), N^U(f\dbar\phi)\rangle| \\
&\lesssim  \|\widetilde{\chi} (f-F_{\varepsilon})\|\|f\|+ |\langle \dbar^{*}_{\D}
N^{\D} (F_{\varepsilon}\dbar\phi),\dbar^{*} \chi_{1} N^U(f\dbar\phi)\rangle|  \\
&\lesssim \varepsilon\|f\|^{2} + \widetilde{C}_{\varepsilon}
\|\dbar^{*}_{\D}N^{\D}(F_{\varepsilon}\dbar\phi) \|_{L^{2}(\D)}\|f\|,
\end{align*}
where $\widetilde{C}_{\varepsilon}$ is a constant that is independent of $f.$
Now we will use the fact that $H_{\phi}^{\D}$ is compact on $A^2(\D)$ and
$\|F_{\varepsilon}\|_{_{L^{2}(\D)}}\leq M_{\varepsilon}\|f\|_{L^{2}(U)} .$ Lemma
\ref{LemCompactH} implies that for any $\varepsilon'>0 $ there exists a compact
operator $K_{\varepsilon'}$ on $X_{\phi}(\D)$ such that
\[\|\dbar^{*}_{\D}N^{\D}(F_{\varepsilon}\dbar\phi) \|_{L^{2}(\D)} \lesssim
\varepsilon' \|F_{\varepsilon}\|_{L^{2}(\D)}+\|K_{\varepsilon'} \Pi_{\dbar\phi}
(F_{\varepsilon})\|_{L^{2}(\D)}.\]   
where $\Pi_{\dbar\phi}:A^2(\D)\to X_{\phi}(\D)$ denotes the (bounded)
multiplication operator by $\dbar\phi.$ That is, $\Pi_{\dbar\phi} h =
h\dbar\phi$ for $h\in
A^2(\D)$. Therefore, for $f\in A^2(U)$ we have the following inequality
\begin{align*}
 \|H^{U}_{\phi} (f)\|^{2} \lesssim& \left(
\varepsilon+\sqrt{\varepsilon} +
\varepsilon'M_{\varepsilon}\widetilde{C}_{\varepsilon} \right)\|f\|^2+
\widetilde{C}_{\varepsilon} \| f\|
\|K_{\varepsilon'}\Pi_{\dbar\phi}E_{\varepsilon}(f) \|_{L^2(\D)} \\
\leq &\left(
\varepsilon+\sqrt{\varepsilon} +
\varepsilon'M_{\varepsilon}\widetilde{C}_{\varepsilon}+
\varepsilon'\widetilde{C}_{\varepsilon}\right)\|f\|^2\\
&+ \left(\widetilde{C}_{\varepsilon} 
+\frac{\widetilde{C}_{\varepsilon}}{\varepsilon'}\right)
\|K_{\varepsilon'}\Pi_{\dbar\phi}E_{\varepsilon}(f)\|^2_{L^2(\D)}.
\end{align*}
 For any $0<\varepsilon<1$ there exists $\varepsilon'>0$ so that 
$\varepsilon+\sqrt{\varepsilon} +
\varepsilon'M_{\varepsilon}\widetilde{C}_{\varepsilon} \leq
2\sqrt{\varepsilon}.$ Then  the above inequality combined with fact that 
$x^2+y^2\leq (x+y)^2$ for $x,y\geq 0$  imply the following:  for any
$0<\varepsilon<1$ there exists a compact operator
$K_{\varepsilon}=(\widetilde{C}_{\varepsilon}+\widetilde{C}_{\varepsilon}/
\varepsilon')^{1/2}K_{\varepsilon'}\Pi_{\dbar\phi}E_{\varepsilon}$ such that 
\[\|H^U_{\phi}(f)\| \lesssim \varepsilon^{1/4} \|f\| + 
\|K_{\varepsilon}(f)\| \text{ for } f\in A^2(U).\] 
Now Lemma \ref{LemCompEstimate} implies that $H^{U}_{\phi}$ is compact on
$A^{2}(U).$ 
\end{proof}

\begin{proof}[Proof of Proposition \ref{PropInC}]
Since functions that are smooth up to the boundary of $\D$ are dense in $C(\Dc)$ and the
sequence  $\{H^{\D}_{\psi_n}\}$ converges to $H^{\D}_{\psi}$ in the operator norm
whenever $\{\psi_n\}$ converges to $\psi$ uniformly on $\Dc$  it suffices to prove that
$H^{\D}_{\psi}$ is compact whenever $\psi\in C^{\infty}(\Dc).$ Let us define 
\[S_{\psi} (f)(z)= -\frac{1}{\pi }\int_{\D}
\frac{\frac{\partial\psi}{\partial\overline\xi}(\xi) f(\xi)}{\xi-z}d\lambda(\xi)\] 
for $f\in A^2(\D)$ and $z\in \D.$ We will show that $H^{\D}_{\psi}$ is compact
on $ A^2(\D)$ by  showing that $S_{\psi}$ is a limit of compact operators (in
the operator norm) and  $ S_{\psi}(f)$ solves $\dbar u =f\dbar\psi$ 
(because $H^{\D}_{\psi} =S_{\psi} - P^{\D}S_{\psi}$). To that end, for $\varepsilon>0$ 
let $\chi_{\varepsilon}$ be a smooth cut-off function on $\mathbb{R}$ such that
$\chi_{\varepsilon}\equiv 1$ on a neighborhood of the origin and
$\chi_{\varepsilon}(t)=0$ for $|t|\geq \epsilon.$ Then
$S_{\psi}=A^{\varepsilon}_{\psi}+B^{\varepsilon}_{\psi}$ where 
\begin{align*}
 A^{\varepsilon}_{\psi}(f)(z)&=-\frac{1}{\pi
}\int_{\D}\frac{\chi_{\varepsilon}(|\xi-z|)\frac{\partial\psi}{\partial \overline\xi}(\xi)
f(\xi)}{\xi-z}d\lambda(\xi)\\
 B^{\varepsilon}_{\psi}(f)(z)&=-\frac{1}{\pi}\int_{\D}
\frac{(1-\chi_{\varepsilon} (|\xi-z|))
\frac{\partial\psi}{\partial \overline\xi}(\xi) f(\xi)}{\xi-z}d\lambda(\xi).
\end{align*}
Then  the operator $B^{\varepsilon}_{\psi}$ is Hilbert-Schmidt and, in particular, 
compact because the kernel
\[-\frac{(1-\chi_{\varepsilon}(|\xi-z|))\frac{\partial\psi}{\partial
\overline\xi}(\xi)}{\pi(\xi-z)}\]
 is square integrable on $\D\times \D.$ 

Next we will show that $A^{\varepsilon}_{\psi}$ has a small norm. Let $\widehat{f}$ denote
the trivial extension of $f$. That is, $\widehat{f}=f$ on $\D$ but  $\widehat{f}=0$
otherwise. Since $\frac{\partial\psi}{\partial \overline\xi}$ is continuous on $\Dc$ and
$\D$ is bounded, using polar coordinates, we get 
\[ |A^{\varepsilon}_{\psi}(f)(z)|\lesssim  \int_{\C}
\frac{|\chi_{\varepsilon}(|\xi|)\widehat{f}(z+\xi)|}{|\xi|} d\lambda(\xi) 
\lesssim \int_0^{2\pi}\int_0^{\varepsilon}|\widehat{f}(z+re^{i\theta})| dr d\theta.\]
Then the Cauchy-Schwarz inequality together with Fubini's theorem yield that
 \begin{align*}
 \|A^{\varepsilon}_{\psi}(f)\|^2\lesssim 2\pi \varepsilon
\int_0^{2\pi}\int_0^{\varepsilon}\int_{\D}|\widehat{f}(z+re^{i\theta})|^2d\lambda(z) dr
d\theta \leq 4\pi^2\varepsilon^2\|f\|^2.
\end{align*}
Hence,  $\|A^{\varepsilon}_{\psi}\|\lesssim \varepsilon$ and $S_{\psi}$ is a limit (in
the operator norm) of a sequence $\{B^{1/k}_{\psi}\}$ of compact operators.

Next we want to show that $\dbar S_{\psi}(f) =f\dbar\psi.$ Let $\{f_n\}$ be a
sequence of functions that are smooth on $\Dc$ and converging to $f$ in
$L^2(\D).$ Then the Cauchy integral with remainder formula 
(see \cite[Theorem 2.1.2]{ChenShawBook}) shows that 
$\dbar S_{\psi}(f_n)=f_n\dbar\psi.$ On the other hand, $\{\dbar S_{\psi}(f_n)\}$
converges weakly to $\dbar S_{\psi}(f)$ and $\{f_n\dbar\psi\}$ converges to
$f\dbar\psi$ in $L^2(\D).$ Therefore, $\dbar S_{\psi}(f) =f\dbar\psi $ for $f\in A^2(\D).$
\end{proof}

\section*{Acknowledgement}
I am in debt to  Daniel Jupiter for bringing the proof of 
\cite[Theorem VI.3]{JupiterThesis} to my attention, to Mehmet \c{C}elik and Trieu Le
for fruitful conversations, to my advisor Emil Straube for the proof of Proposition
\ref{PropInC}, and to the referee for valuable comments.


\end{document}